\newtheorem{theorem}{Theorem}
\newtheorem{proposition}{Proposition}
\newtheorem{definition}{Definition}
\newtheorem{conjecture}{Conjecture}
\theoremstyle{definition}
\newtheorem{example}{Example}
\begin{document}

\begin{center}
{\Huge Discrete tomography reconstructions with small boundary}

\bigskip

{\Large Birgit van Dalen}

\textit{Mathematisch Instituut, Universiteit Leiden, Niels Bohrweg 1, 2333 CA Leiden, The Netherlands \\ dalen@math.leidenuniv.nl}

\today

\end{center}

{\small \textbf{Abstract:} We consider the problem of reconstructing binary images from their horizontal and vertical projections. For any reconstruction we define the length of the boundary of the image. In this paper we assume that the projections are monotone, and we construct an image satisfying these projections that has a relatively small boundary. We also give families of examples for which we show that no smaller boundary is possible.}

\section{Introduction}\label{introduction}

An important problem in discrete tomography is to reconstruct a binary image on a lattice from given projections in lattice directions \cite{boek, boeknieuw}. Each point of a binary image
has a value equal to zero or one. The line sum of a line through the image is the sum of the
values of the points on this line. The projection of the image in a certain direction
consists of all the line sums of the lines through the image in this direction. Any binary image with exactly the same projections as the original image we call a \emph{reconstruction} of the image.

For any set of more than two directions, the problem of reconstructing a binary image from its projections in those directions is NP-complete \cite{gardner}. For exactly two directions, the horizontal and vertical ones, say, it is possible to reconstruct an image in polynomial time. Already in 1957, Ryser described an algorithm to do so \cite{ryser}. He also characterised the set of projections that correspond to a unique binary image.

If there are multiple images corresponding to a given set of line sums, it is interesting to reconstruct an image with a special property. In order to find reconstructions that look rather like a real object, two special properties in particular are often imposed on the reconstructions. The first is \emph{connectivity} of the points with value one in the picture \cite{hv-convex2,hv-convex3,woeginger}. The second is \emph{hv-convexity}: if in each row and each column, the points with value one form one connected block, the image is called \emph{hv-convex}. The reconstruction of hv-convex images, either connected or not necessarily connected, has been studied extensively \cite{hv-convex1, hv-convex2, hv-convex3, dahlflatberg, woeginger}.

Another relevant concept in this context is the \emph{boundary} of a binary image. The boundary can be defined as the set of pairs consisting of two adjacent points, one with value 0 and one with value 1. Here we use 4-adjacency: that is, a point is adjacent to its two vertical and to its two horizontal neighbours \cite{connectivity}. The number of such pairs of adjacent points with two different values is called the \emph{length of the boundary} or sometimes the \emph{perimeter length} \cite{gray}.

In this paper we will consider given line sums that may correspond to more than one binary image. Since the boundary of real objects is often small compared to the area, it makes sense to look for reconstructions of which the length of the boundary is as small as possible. In particular, if there exists an hv-convex reconstruction, then the length of the boundary of that image is the smallest possible. In that sense, the length of the boundary is a more general concept than hv-convexity.

In \cite{birgit5} we proved a lower bound on the length of the boundary for any reconstruction of a picture with given line sums. In this paper we complement this result by giving a reconstruction that has a relatively small boundary in the case that both the row and the column sums are monotone.

After introducing some notation in Section \ref{notation}, we describe the construction of a solution to the discrete tomography problem in Section \ref{construction}. In Section \ref{boundary} we prove upper bounds on the length of the boundary of this constructed solution. We show by examples that these bounds are sharp in Section \ref{examples}, and finally in Section \ref{generalisation} we generalise the results slightly.

\section{Definitions and notation}\label{notation}

Let $F$ be a finite subset of $\mathbb{Z}^2$ with characteristic function $\chi$. (That is, $\chi(k,l) = 1$ if $(k,l) \in F$ and $\chi(k,l) = 0$ otherwise.) For $i \in \mathbb{Z}$, we define \emph{row} $i$ as the set $\{(k,l) \in \mathbb{Z}^2: k = i\}$. We call $i$ the index of the row. For $j \in \mathbb{Z}$, we define \emph{column} $j$ as the set $\{(k,l) \in \mathbb{Z}^2: l = j\}$. We call $j$ the index of the column. Note that we follow matrix notation: we indicate a point $(i,j)$ by first its row index $i$ and then its column index $j$. Also, we use row numbers that increase when going downwards and column numbers that increase when going to the right.

The \emph{row sum} $r_i$ is the number of elements of $F$ in row $i$, that is $r_i = \sum_{j \in \mathbb{Z}} \chi(i,j)$. The \emph{column sum} $c_j$ of $F$ is the number of elements of $F$ in column $j$, that is $c_j = \sum_{i \in \mathbb{Z}} \chi(i,j)$. We refer to both row and column sums as the \emph{line sums} of $F$. We will usually only consider finite sequences $\mathcal{R} = (r_1, r_2, \ldots, r_m)$ and $\mathcal{C} = (c_1, c_2, \ldots, c_n)$ of row and column sums that contain all the nonzero line sums. In this paper we will always assume that the line sums are monotone, that is $r_1 \geq r_2 \geq \ldots \geq r_m$ and $c_1 \geq c_2 \geq \ldots \geq c_n$.

Given sequences of integers $\mathcal{R} = (r_1, r_2, \ldots, r_m)$ and $\mathcal{C} = (c_1, c_2, \ldots, c_n)$, we say that $(\mathcal{R}, \mathcal{C})$ is \emph{consistent} if there exists a set $F$ with row sums $\mathcal{R}$ and column sums $\mathcal{C}$. Define $b_i = \#\{j: c_j \geq i\}$ for $i = 1, 2, \ldots, m$. Ryser's theorem \cite{ryser} states that if $r_1 \geq r_2 \geq \ldots \geq r_m$, the line sums $(\mathcal{R}, \mathcal{C})$ are consistent if and only if for each $k = 1, 2, \ldots, m$ we have $\sum_{i=1}^k b_i \geq \sum_{i=1}^k r_i$. Note that by definition of $b_i$ we have $\sum_{i=1}^m b_i = \sum_{j=1}^n c_j = \sum_{i=1}^m r_i$.

We will now define a \emph{uniquely determined neighbour corresponding to line sums} $(\mathcal{R},\mathcal{C})$. This is a uniquely determined set that is in some sense the closest to any set with those line sums. See also \cite[Section 4]{birgit2}.

\begin{definition}
Let row sums $\mathcal{R} = (r_1, r_2, \ldots, r_m)$ and column sums $\mathcal{C} = (c_1, c_2, \ldots, c_n)$ be given, where $n=r_1 \geq r_2 \geq \ldots \geq r_m$ and $m=c_1 \geq c_2 \geq \ldots \geq c_n$. Let $b_i = \#\{j: c_j \geq i\}$ for $i = 1, 2, \ldots, m$. Then the column sums $c_1$, $c_2$, \ldots, $c_n$ and row sums $b_1$, $b_2$, \ldots, $b_m$ uniquely determine a set $F_1$, which we will call the \emph{uniquely determined neighbour corresponding to line sums} $(\mathcal{R},\mathcal{C})$.
\end{definition}

Suppose line sums $\mathcal{R} = (r_1, r_2, \ldots, r_m)$ and $\mathcal{C} = (c_1, c_2, \ldots, c_n)$ are given, where $r_1 \geq r_2 \geq \ldots \geq r_m$ and $c_1 \geq c_2 \geq \ldots \geq c_n$. Let the uniquely determined neighbour corresponding to $(\mathcal{R}, \mathcal{C})$ have row sums $b_1 \geq b_2 \geq \ldots \geq b_n$. Then we define
\[
\alpha(\mathcal{R},\mathcal{C}) = \frac{1}{2} \sum_{i=1}^{m} |r_i - b_i|.
\]
Note that $\alpha(\mathcal{R},\mathcal{C})$ is an integer, since $2\alpha(\mathcal{R},\mathcal{C})$ is congruent to
\[
\sum_{i=1}^m (r_i + b_i) = \sum_{i=1}^m r_i + \sum_{i=1}^m b_i = 2\sum_{i=1}^m r_i \equiv 0 \mod 2.
\]
If we write $d_i = b_i - r_i$ for all $i$, then because $\sum_{i=1}^m r_i = \sum_{i=1}^m b_i$, we have
\[
\alpha = \sum_{d_i>0} d_i = - \sum_{d_i<0} d_i.
\]

We can view the set $F$ as a picture consisting of cells with zeroes and ones. Rather than $(i,j) \in F$, we might say that $(i,j)$ has value 1 or that there is a one at $(i,j)$. Similarly, for $(i,j) \not\in F$ we sometimes say that $(i,j)$ has value zero or that there is a zero at $(i,j)$.

We define the \emph{boundary} of $F$ as the set consisting of all pairs of points $\big( (i,j), (i',j') \big)$ such that
\begin{itemize}
\item $i=i'$ and $|j-j'| =1$, or $|i-i'| = 1$ and $j=j'$, and
\item $(i,j) \in F$ and $(i',j') \not\in F$.
\end{itemize}
One element of this set we call \emph{one piece of the boundary}. We can partition the boundary into two subsets, one containing the pairs of points with $i=i'$ and the other containing the pairs of points with $j=j'$. The former set we call the \emph{vertical boundary} and the latter set we call the \emph{horizontal boundary}. We define the \emph{length of the (horizontal, vertical) boundary} as the number of elements in the (horizontal, vertical) boundary. For a given set $F$ we denote the length of the horizontal boundary by $L_h(F)$ and the length of the vertical boundary by $L_v(F)$.

\section{The construction}\label{construction}

In this section we will construct a set $F_2$ satisfying given monotone row and column sums that are consistent. First we will describe one step of this construction.

Let row sums $\mathcal{R} = (r_1, r_2, \ldots, r_m)$ and column sums $\mathcal{C} = (c_1, c_2, \ldots, c_n)$ be given, such that $n = r_1 \geq r_2 \geq \ldots \geq r_m$ and $m = c_1 \geq c_2 \geq \ldots \geq c_n$. Assume that those line sums are consistent. For $i = 1, 2, \ldots, m$ define $b_i = \#\{j: c_j \geq i\}$ and $d_i = b_i - r_i$. For convenience we define $r_{m+1} = b_{m+1} = d_{m+1} =0$. We have $n = b_1 \geq b_2 \geq \ldots \geq b_m > b_{m+1}$.

Let $F_1$ be the uniquely determined neighbour corresponding to the line sums $(\mathcal{R}, \mathcal{C})$. Then $F_1$ has row sums $(b_1, b_2, \ldots, b_m)$ and column sums $(c_1, c_2, \ldots, c_n)$. Moreover, in every column $j$ the elements of $F_1$ are exactly in the first $c_j$ rows.

If $r_i = b_i$ for all $i$, then $F_1$ already satisfies the line sums $(\mathcal{R},\mathcal{C})$, and there is nothing to be done. Now assume that not for all $i$ we have $r_i = b_i$. Then there is at least one $i$ with $d_i >0$ and one $i$ with $d_i < 0$. Also, because of the consistency of the line sums the smallest $i$ with $d_i \neq 0$ satisfies $d_i > 0$.

Let $i_1$ be minimal such that $d_{i_1} >0$ and let $i_2$ be minimal such that $d_{i_2} > 0$ and $d_{i_2+1} \leq 0$. Let $R^+ = \{i_1, i_1+1, \ldots, i_2\}$. Similarly, let $i_3$ be minimal such that $d_{i_3} < 0$ and let $i_4$ be minimal such that $d_{i_4}<0$ and $d_{i_4+1} \geq 0$. Such $i_4$ exists, since $d_{m+1} = 0$. Let $R^- = \{i_3, i_3+1, \ldots, i_4\}$. Now $d_i > 0$ for all $i \in R^+$ and $d_i < 0$ for all $i \in R^-$.

If $|R^+| \leq |R^-|$, we execute an \textbf{A-step}, while if $|R^+| > |R^-|$, we execute a \textbf{B-step}. We will now describe these two different steps.

\textbf{A-step.} Let $j$ be maximal such that $c_j \in R^+$. Such a $j$ exists, because as $b_{i_2+1} \leq r_{i_2+1} \leq r_{i_2} < b_{i_2}$, there exists a column with sum $i_2$. Define $s= c_j-i_1+1$; this is the number of rows $i$ with $i_1 \leq i \leq c_j$. We will be moving the ones in the $s$ cells $(i_1, j)$, \ldots, $(c_j, j)$ to other cells. To determine to which cells those ones are moved, consider $i_3, i_3+1, \ldots, i_3+s-1$. Since $i_4-i_3+1 = |R^-| \geq |R^+| \geq s$, we have $i_3+s-1 \leq i_4$, so $\{i_3, i_3+1, \ldots, i_3+s-1\} \subset R^-$. If $r_{i_3+s-1} > r_{i_3+s}$, then let $I = \{i_3, i_3+1, \ldots, i_3+s-1\}$.

Now suppose $r_{i_3+s-1} = r_{i_3+s}$. Let $t_1$ be minimal such that $i_3 \leq t_1 \leq i_3+s-1$ and $r_{t_1} = r_{i_3+s-1}$. Let $t_2$ be such that $t_2 \geq i_3+s$ and $r_{i_3+s-1} = r_{t_2} > r_{t_2+1}$. Since we have $d_{i_4+1} \geq 0$, we have $r_{i_4+1} \leq b_{i_4+1} \leq b_{i_4} < r_{i_4}$, hence $t_2 \leq i_4$. Let $t_3 = t_2+t_1-i_3-s+1$. As $t_2 \geq i_3+s$, we have $t_3 \geq t_1+1$, and as $t_1 \leq i_3+s-1$, we have $t_3 \leq t_2$. Now define $I = \{i_3, i_3+1, \ldots, t_1-1\} \cup \{t_3, t_3+1, \ldots, t_2\}$. We have $|I| = (t_1-i_3) + (-t_1+i_3+s) = s$.

In both cases we have now defined a set $I \subset R^-$ with $|I| = s = c_j - i_1+1$ and satisfying the following property: if $i \in I$ and $i+1 \not\in I$, then $r_i > r_{i+1}$.

Now we move the ones from the rows $i$ with $i_1 \leq i \leq c_j$ to the rows $i \in I$. This column will later be one of the columns of $F_2$. We delete the column and change the line sums accordingly: define for $i=1, 2, \ldots, m$ the new row sums $r_i'$, which is equal to $r_i$ if there was no one in this row in column $j$, and equal to $r_i-1$ if there was a one in this row in column $j$. We have
\[
r_i' = \begin{cases} r_i-1 & \text{for $i < i_1$}, \\ r_i & \text{for $i_1 \leq i \leq c_j$}, \\ r_i-1 & \text{for $i \in I$}, \\ r_i & \text{for $i > c_j$ and $i \not\in I$}. \end{cases}
\]
Also let $b_i'$ be the number of columns not equal to $j$ with column sum at least $i$. We have
\[
b_i' = \begin{cases} b_i - 1 & \text{for $i \leq c_j$}, \\ b_i & \text{for $i> c_j$}. \end{cases}
\]
Note that the set $F_1'$, defined as $F_1$ without column $j$, has row sums $b_1', b_2', \ldots, b_m'$.

We now want to show that the new row sums are non-increasing and that they are consistent, together with the column sums without column $j$, that is, that $\sum_{i=1}^k b_i' \geq \sum_{i=1}^k r_i'$ for $k =1, 2, \ldots, m$.

Suppose for some $i$ we have $r_i' < r_{i+1}'$. Then we must have $r_i' = r_i-1$ and $r_{i+1}' = r_{i+1}$, since $r_i \geq r_{i+1}$. So either $i=i_1-1$ or $i \in I$ and $i+1 \not\in I$. In the latter case we know $r_i > r_{i+1}$, hence $r_i' \geq r_{i+1}'$. If on the other hand $i=i_1-1$, we have $d_i = 0$ and $d_{i+1} > 0$, so $r_i = b_i \geq b_{i+1} > r_{i+1}$, hence $r_i' \geq r_{i+1}'$. We conclude that it can never happen that $r_i' < r_{i+1}'$. So $n-1 = r_1' \geq r_2' \geq \ldots \geq r_m'$.

Now we prove consistency. For $i<i_1$ we have $d_i = 0$, hence
\[
b_i' - r_i' = (b_i-1)-(r_i-1)= d_i = 0.
\]
For $i_1 \leq i \leq c_j$ we have $d_i > 0$, hence
\[
b_i' - r_i' = (b_i-1) - r_i = d_i - 1 \geq 0.
\]
For $c_j+1 \leq i \leq i_3 -1$ we have $d_i \geq 0$, hence
\[
b_i' - r_i' = b_i - r_i = d_i \geq 0.
\]
So for $k \leq i_3 - 1$ we clearly have
\[
\sum_{i=1}^k (b_i' - r_i') \geq 0.
\]
On the other hand, for $k \geq i_4$ we have $\sum_{i=1}^k (b_i-r_i) \geq 0$ because of the consistency of the original line sums, hence
\[
\sum_{i=1}^k (b_i' - r_i') = \left( \sum_{i=1}^k b_i - c_j \right) - \left( \sum_{i=1}^k r_i - c_j \right) = \sum_{i=1}^k (b_i-r_i) \geq 0.
\]
For $i_3 \leq i \leq i_4$ we have $d_i < 0$, so
\[
b_i' - r_i' = b_i-r_i = d_i <0 \qquad \text{if $i \not\in I$},\] \[ b_i' - r_i' = b_i - (r_i -1) = d_i +1 \leq 0 \qquad \text{if $i \in I$}.
\]
Hence for $i_3 \leq k \leq i_4-1$ we have
\[
\sum_{i=1}^k (b_i' - r_i') = \sum_{i=1}^{i_4} (b_i' - r_i') - \sum_{i=k+1}^{i_4} (b_i'-r_i') \geq 0.
\]
This proves the consistency.

\textbf{B-step.} Let $j$ be minimal such that $c_j+1 \in R^-$. Such a $j$ exists, because as $b_{i_3-1} \geq r_{i_3-1} \geq r_{i_3} > b_{i_3}$, there exists a column with sum $i_3-1$. Similarly to the A-step, we find a set $I \subset R^+$ such that $|I| = i_4 - c_j$ with the following property: if $i \not\in I$ and $i+1 \in I$, then $r_i > r_{i+1}$.

Now we move the ones from the rows $i$ with $i \in I$ to the rows $i$ with $c_j+1 \leq i \leq i_4$. This column will later be one of the columns of $F_2$. We delete the column and change the line sums accordingly. Analogously to above we prove that the new line sums are non-increasing and consistent, and that the set $F_1'$ that we have left, is the uniquely determined neighbour corresponding to these new line sums.

The procedure described above, which changes line sums $(\mathcal{R}, \mathcal{C})$ and their uniquely determined neighbour $F_1$ to new line sums $(\mathcal{R'}, \mathcal{C'})$ and their uniquely determined neighbour $F_1'$, we denote by $\varphi$. Since the new line sums satisfy all the necessary properties, we can apply $\varphi$ also to $(\mathcal{R'}, \mathcal{C'})$ and $F_1'$. We can repeat this until we arrive at a situation where the uniquely determined neighbour already satisfies the line sums. One by one we can then put the deleted columns back in the right position (first the column that was last deleted, then the one that was deleted before that, and so on, to make sure that the resulting set $F_2$ has its columns in the right order). Every time we put back a column, the line sums change back to what they were before that instance of $\varphi$ was applied. When all the columns are back in place, the line sums are therefore equal to $(\mathcal{R}, \mathcal{C})$ and the resulting set satisfies these line sums. This proves the following theorem.

\begin{theorem}\label{thmconstruction}
Let row sums $\mathcal{R} = (r_1, r_2, \ldots, r_m)$ and column sums $\mathcal{C} = (c_1, c_2, \ldots, c_n)$ be given, where $n=r_1 \geq r_2 \geq \ldots \geq r_m$ and $m=c_1 \geq c_2 \geq \ldots \geq c_n$. Assume that the line sums are consistent. Let $F_1$ be the uniquely determined neighbour corresponding to the line sums $(\mathcal{R},\mathcal{C})$. If we start with $F_1$ and repeatedly apply $\varphi$ until this is no longer possible, and then put all the deleted columns back in the right position, then the result is a set $F_2$ that satisfies the line sums $(\mathcal{R},\mathcal{C})$.
\end{theorem}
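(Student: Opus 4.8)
The plan is to prove the theorem by induction on the number $n$ of columns, with the step map $\varphi$ as the inductive device. The assertion to be established is that the procedure terminates and outputs a set $F_2$ whose row sums are $\mathcal{R}$ and whose column sums are $\mathcal{C}$. Almost all of the technical content about a single application of $\varphi$ has already been verified above (for the A-step in full, and for the B-step by the analogous argument), so the remaining work is to organise these facts into a clean induction, to observe that $\varphi$ strictly decreases $n$ and hence can be applied only finitely often, and to check that reinserting the deleted column restores the original line sums.

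For the base case I would take the situation $\alpha(\mathcal{R},\mathcal{C}) = 0$, equivalently $r_i = b_i$ for all $i$. Here $F_1$ already has row sums $\mathcal{R}$ and column sums $\mathcal{C}$, so we set $F_2 = F_1$; this includes the degenerate case $n=0$. For the inductive step I assume $\alpha > 0$. As noted before the theorem, there is then at least one index with $d_i > 0$ and one with $d_i < 0$, so both $R^+$ and $R^-$ are nonempty and exactly one of the A-step or B-step applies (according to whether $|R^+| \leq |R^-|$). Invoking the verification above, $\varphi$ is well defined (the required column index $j$ and the set $I$ exist), the new line sums $(\mathcal{R}', \mathcal{C}')$ are again monotone and consistent, the set $F_1'$ obtained by deleting column $j$ from $F_1$ is the uniquely determined neighbour of $(\mathcal{R}', \mathcal{C}')$, and $\mathcal{C}'$ has exactly $n-1$ columns. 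The induction hypothesis applied to $(\mathcal{R}', \mathcal{C}')$ and $F_1'$ then produces a set $F_2'$ satisfying $(\mathcal{R}', \mathcal{C}')$.

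It remains to reinsert the deleted column; I describe this for the A-step, the B-step being symmetric. By construction this column carries ones exactly in the rows $\{1, \ldots, i_1 - 1\} \cup I$, and since the elements of $I$ all exceed $c_j \geq i_1 - 1$ these two index sets are disjoint, so its column sum is $(i_1 - 1) + |I| = (i_1 - 1) + s = c_j$, matching the sum of the deleted column; reinserting it in the position dictated by $\mathcal{C}$ therefore restores the column sums to $\mathcal{C}$. Adding this column back increases by one the row sums of $F_2'$ in the rows of $\{1, \ldots, i_1 - 1\} \cup I$ and leaves the others unchanged, and comparison with the explicit formula for $r_i'$ shows the outcome equals $r_i$ in every row. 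Hence the reassembled set has row sums $\mathcal{R}$ and column sums $\mathcal{C}$. Because each application of $\varphi$ removes one column, the process halts after at most $n$ steps, and it can halt only when $\alpha = 0$, so the base case is genuinely reached and the columns can be reinserted one at a time in reverse order of deletion.

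The main obstacle is not this assembly but the single-step facts on which it rests, so I would make those airtight first. The delicate points are that deleting column $j$ leaves a set still of uniquely-determined-neighbour shape whose row sums are exactly $b_i' = \#\{j' \neq j : c_{j'} \geq i\}$ — this is what legitimises reapplying $\varphi$ — and that the B-step truly mirrors the A-step, yielding a set $I \subset R^+$ with the stated gap property and the same consistency conclusion. A secondary but necessary point is the bookkeeping of reinsertion positions: one must check that restoring the columns in reverse order of deletion places them so that the final column-sum sequence is literally the non-increasing sequence $\mathcal{C}$, rather than merely the correct multiset.
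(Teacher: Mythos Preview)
Your proposal is correct and follows essentially the same route as the paper: the paper's proof is the paragraph immediately preceding the theorem, which simply observes that the single-step verifications already carried out (well-definedness of $\varphi$, monotonicity and consistency of $(\mathcal{R}',\mathcal{C}')$, and that $F_1'$ is again a uniquely determined neighbour) allow one to iterate $\varphi$ until $\alpha=0$ and then reinsert the deleted columns in reverse order to recover the original line sums. You have recast this iteration as an explicit induction on $n$ and spelled out the reinsertion bookkeeping (that the reinserted column has sum $c_j$ and that adding it back to $F_2'$ restores $r_i$ from $r_i'$), which the paper leaves implicit, but the underlying argument is the same.
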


Now we show an example of this construction. Let $m=12$, $n=11$ and define line sums
\[
\mathcal{R} = (11, 10, 8, 8, 8, 6, 6, 6, 3, 3, 3, 2), \qquad \mathcal{C} = (12, 10, 7, 6, 6, 6, 6, 6, 6, 6, 3).
\]
We have
\[
(b_1, \ldots, b_{12}) = (11, 11, 11, 10, 10, 10, 3, 2, 2, 2, 1, 1), \]\[(d_1, \ldots, d_{12}) =  (0, +1, +3, +2, +2, +4, -3, -4, -1, -1, -2, -1).
\]
We will now do the construction step by step, illustrated by Figure \ref{figconstructie}. The $r_i$ and $d_i$ in every step are indicated in the figure. We start with the uniquely determined neighbour $F_1$, that is, the set with column sums $\mathcal{C}$ and row sums $(b_1, \ldots, b_{12})$.

\begin{figure}
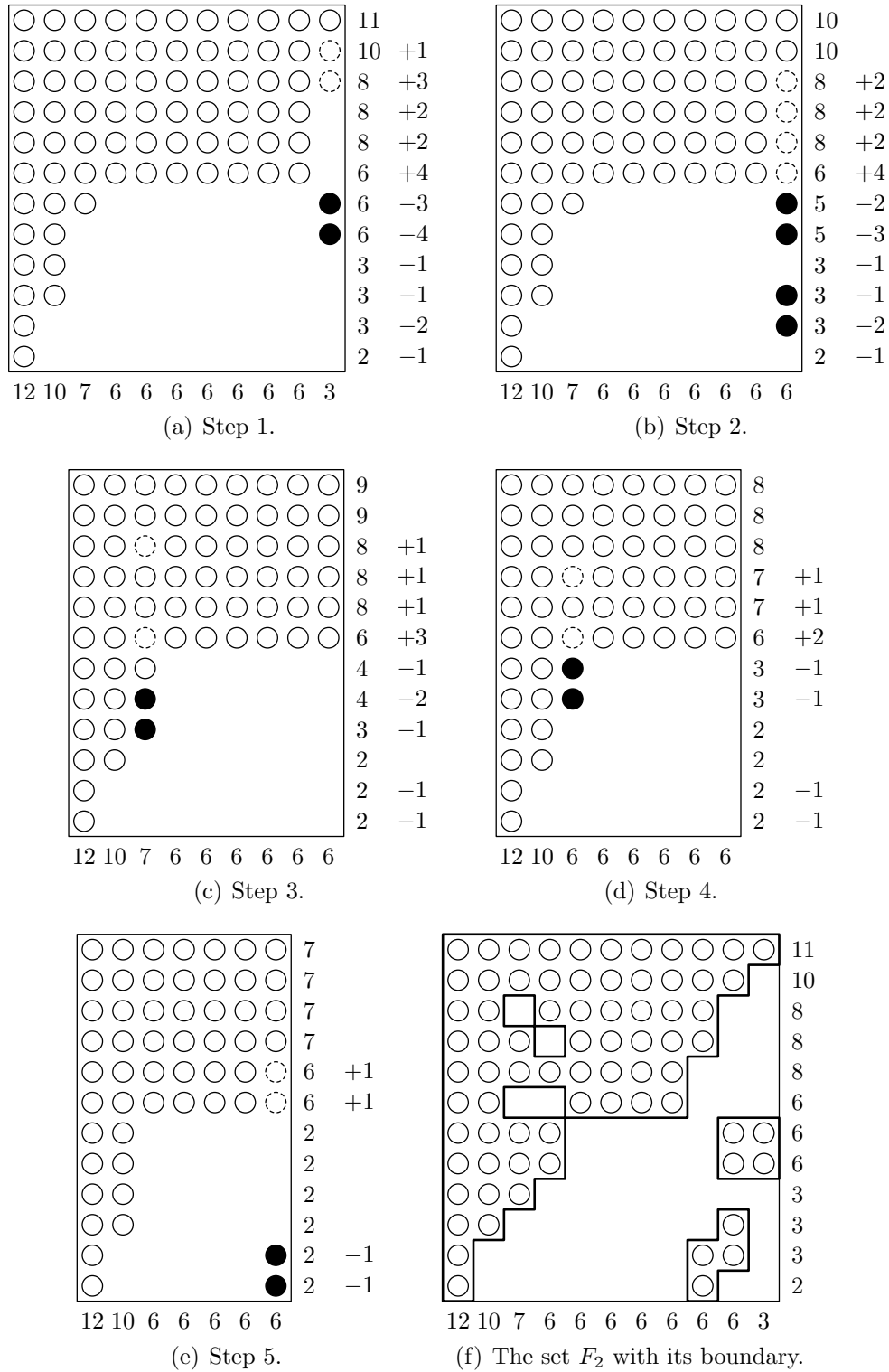

  \begin{center}
    \subfigure[Step 1.]{\includegraphics{plaatje.4}}
    \qquad
    \subfigure[Step 2.]{\includegraphics{plaatje.5}}
    \qquad
    \subfigure[Step 3.]{\includegraphics{plaatje.6}}
    \qquad
    \subfigure[Step 4.]{\includegraphics{plaatje.7}}
    \qquad
    \subfigure[Step 5.]{\includegraphics{plaatje.8}}
    \qquad
    \subfigure[The set $F_2$ with its boundary.]{\label{figconstructieeind}\includegraphics{plaatje.9}}
  \end{center}
  \caption{The construction of the set $F_2$. The ones are indicated by white circles. The dashed circles are ones that are deleted in that step, while the black circles are ones that are newly added in that step. The numbers directly next to each figure are the row sums, while the numbers next to that are the $d_i$.}
  \label{figconstructie}
\end{figure}

\textbf{Step 1.} We have $R^+=\{2, 3, 4, 5, 6\}$, $R^-=\{7, 8, 9, 10, 11, 12\}$. Since $|R^+| \leq |R^-|$, we execute an A-step. The rightmost column $j$ with $c_j \in R^+$ is column 11, with sum 3. We delete the ones in $(2,11)$ and $(3,11)$. We find $I=\{7, 8\}$, since $r_8 > r_9$. So we add ones in $(7,11)$ and $(8,11)$. We then delete column 11.

\textbf{Step 2.} We have $R^+ = \{3, 4, 5, 6\}$ and $R^-=\{7, 8, 9, 10, 11,12\}$. Since $|R^+| \leq |R^-|$, we execute an A-step. The rightmost column $j$ with $c_j \in R^+$ is column 10, with sum 6. We delete the ones in this column in rows 3, 4, 5 and 6. Since $r_{10} = r_{11}$, we cannot use $I = \{7, 8, 9, 10\}$. Instead we take $I= \{7, 8, 10, 11\}$. This works since $r_8 > r_9$ and $r_{11} > r_{12}$. So we add ones in column 10 in rows 7, 8, 10 and 11. We then delete column 10.

\textbf{Step 3.} In row 10, the new row sum is 2, while the new $b_{10}$ is also 2. So the new $d_{10}$ is 0. This means that while $R^+$ is still equal to $\{3, 4, 5, 6\}$, we now have $R^- = \{7, 8, 9\}$. Hence $|R^+| > |R^-|$ and therefore we execute a B-step. The leftmost column $j$ with $c_j +1 \in R^-$ is column 3 with sum 7. So we add ones in $(8,3)$ and $(9,3)$. As $r_5 = r_4 = r_3$, we cannot take $I = \{6, 5\}$, but we have to take $I = \{6, 3\}$. Hence we delete ones in $(3,3)$ and $(6,3)$. We then delete column 3.

\textbf{Step 4.} We have $R^+=\{4, 5, 6\}$ and $R^- = \{7, 8\}$. As $|R^+| > |R^-|$, we execute a B-step. The leftmost column $j$ with $c_j +1 \in R^-$ is column 3 (which was originally column 4) with sum 6. We add ones in $(7,3)$ and $(8,3)$. As $r_5 = r_4$, we take $I = \{6, 4\}$, so we delete ones from $(4,3)$ and $(6,3)$. We then delete column 3.

\textbf{Step 5.} We have $R^+= \{5, 6\}$ and $R^- =\{11, 12\}$. As $|R^+| \leq |R^-|$, we execute an A-step. The rightmost column $j$ with $c_j \in R^+$ is column 7 (which was originally column 9) with sum 6. We deletes ones from $(5,7)$ and $(6,7)$, and we add ones in $(11,7)$ and $(12,7)$. We then delete column 7.

Now all $d_i$ have become 0, so we are done. We put back the deleted columns in their original places and find the set $F_2$ that satisfies the original line sums, see Figure \ref{figconstructieeind}.

\section{Boundary length of the constructed solution}\label{boundary}

In this section we prove upper bounds on the length of the boundary of the set that results from the construction described in the previous section.

\begin{theorem}\label{thmgrensrandalpha}
Let row sums $\mathcal{R} = (r_1, r_2, \ldots, r_m)$ and column sums $\mathcal{C} = (c_1, c_2, \ldots, c_n)$ be given, where $n=r_1 \geq r_2 \geq \ldots \geq r_m$ and $m=c_1 \geq c_2 \geq \ldots \geq c_n$. Assume that the line sums are consistent. Let $\alpha = \alpha(\mathcal{R}, \mathcal{C})$. For the set $F_2$ constructed in Theorem \ref{thmconstruction} we have
\[
L_h(F_2) \leq 2n+2\alpha, \qquad L_v(F_2) \leq 2m + 2\alpha.
\]
\end{theorem}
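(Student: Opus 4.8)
The plan is to exploit a fact that is implicit in the construction: $F_2$ is obtained from the Young-diagram-shaped set $F_1$ by rearranging ones \emph{vertically within each column}. Indeed, every application of $\varphi$ only moves ones inside the single column it peels off, and that column is later returned to its original position; so $F_2$ has the same column sums $c_j$ as $F_1$, while its row sums become $r_i$. Because a vertical boundary piece joins two horizontally adjacent cells and a horizontal boundary piece joins two vertically adjacent cells, each boundary length decomposes into independent contributions: writing $k_j$ for the number of maximal runs of ones in column $j$ and $\ell_i$ for the number in row $i$, we have $L_h(F_2)=2\sum_j k_j$ and $L_v(F_2)=2\sum_i \ell_i$, and for the single-run rows and columns of $F_1$ this gives $L_h(F_1)=2n$, $L_v(F_1)=2m$. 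I would therefore track the construction as sets $G_0=F_1, G_1, \ldots, G_T=F_2$, where $G_t$ applies the first $t$ peeling rearrangements to the full picture (all columns kept in place), and bound the increase of each boundary length at every step.

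First I would record the identity $\sum_t s_t = \alpha$, where $s_t$ is the number of ones moved at step $t$. An A-step sends $d_i \mapsto d_i-1$ on the $s=c_j-i_1+1$ excess rows $i_1,\ldots,c_j$ and $d_i\mapsto d_i+1$ on the $s$ deficit rows of $I$, so $\sum_{d_i>0} d_i$ drops by exactly $s$; the same holds for a B-step. Since the process ends with all $d_i=0$, the $s_t$ sum to $\alpha$. For $L_h$ this already suffices: step $t$ alters only the column it peels, and because the moved set $I$ consists of at most two blocks while $|I|=s_t$, the run count of that column rises from $1$ to at most $1+s_t$, i.e. $k_j-1\le s_t$. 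Hence $L_h$ increases by at most $2s_t$ per step and $L_h(F_2)\le 2n+2\alpha$.

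The real work is bounding, step by step, the number of row-gaps created, where a gap is an internal maximal run of zeros in a row, so that $L_v(F_2)=2m+2\cdot(\text{total gaps})$. The governing invariant is that no row ever changes the sign of its $d_i$: a touched excess row only ever loses ones and a touched deficit row only ever gains ones (each $|d_i|$ decreases monotonically to $0$, and neutral rows are never touched). Using this I would show that at each step one of the two sides of the move creates no new gaps. In an A-step the peeled column $j$ is the rightmost with $c_j\in R^+$, which forces $b_i=j$ in $F_1$ for every removed row $i\in\{i_1,\ldots,c_j\}$; being excess, these rows have never gained a one, so column $j$ is still their rightmost one in $G_{t-1}$ and deleting it merely shortens a run, creating no gap, while the $s_t$ additions create at most $s_t$ gaps. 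In a B-step the peeled column $j$ is the leftmost with $c_j+1\in R^-$, which forces $c_{j-1}\ge i_4$, so every row $i\in\{c_j+1,\ldots,i_4\}$ receiving a one already carries a one at $(i,j-1)$ (a deficit row never loses ones); adding $(i,j)$ next to an existing one on its left cannot create a gap, while the $s_t$ removals create at most $s_t$ gaps. Either way a step adds at most $s_t$ gaps, so the total is at most $\sum_t s_t=\alpha$ and $L_v(F_2)\le 2m+2\alpha$.

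The main obstacle I expect is exactly this pair of one-sided no-gap claims, since a careless count of both the removed and the added ones loses a factor of two and only gives $2\alpha$. Making them precise requires combining the extremality of the peeled column (rightmost in an A-step, leftmost in a B-step) with the monotone sign invariant on the $d_i$, and then checking that these structural facts survive in the evolving full picture $G_{t-1}$ — that is, that previously finalized columns cannot have disturbed the rightmost one of an excess row or the left neighbour of a deficit row, which again follows because such a disturbance would force a sign change that the invariant forbids. Verifying this invariance, rather than the run arithmetic, is the delicate part.
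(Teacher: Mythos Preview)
Your plan coincides with the paper's proof: track the per-step boundary increase starting from $F_1$, note $\sum_t s_t=\alpha$, and for $L_v$ show that one side of each move (the deletions in an A-step, the additions in a B-step) creates no row-gaps while the other side creates at most $s_t$. The $L_h$ argument is fine.

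The gap is in your justification of the A-step no-gap claim. The assertion ``$b_i=j$ in $F_1$'' holds only at the first application of $\varphi$; at later steps, columns to the right of the currently peeled one may already have been processed, and in the \emph{original} $F_1$ the rightmost one of row $i$ can sit strictly to the right of the peeled column. Your fallback (``such a disturbance would force a sign change'') only rules out the possibility that a one at $(i,j{+}1)$ in $G_{t-1}$ was \emph{added} during an earlier step, since that would need $d_i<0$ then versus $d_i>0$ now; it does not rule out a one that was already present in $F_1$ and simply survived the processing of column $j{+}1$. The paper closes exactly this case by splitting on the type of that earlier step: if column $j{+}1$ was handled by an A-step, one checks that row $i$ lay in its deletion range, so the surviving one is impossible; if it was handled by a B-step, one deduces $c_j=c_{j+1}$ and then the \emph{minimality} rule of that B-step should have selected column $j$ rather than $j{+}1$ --- a contradiction coming from extremality of the column choice, not from the sign invariant. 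So the two ingredients you name are both required, but they enter through this explicit case analysis rather than through a single sign-change argument; the B-step no-gap claim has the mirror-image issue.
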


\begin{proof}
Let $F_1$ be the uniquely determined neighbour corresponding to the line sums $(\mathcal{R},\mathcal{C})$. Starting with $F_1$, we apply $\varphi$ repeatedly, moving ones in several columns accordingly and deleting those columns. After that, to analyse what happens to the boundary, we start again with $F_1$ and repeat the entire procedure, moving exactly the same ones, but this time keeping the columns that were supposed to be deleted.

The length of the horizontal boundary of $F_1$ is equal to $2n$, since there are $n$ columns that each contain one connected set of ones. The length of the vertical boundary of $F_1$ is $2m$. Note that the ones that are moved when applying $\phi$ are always deleted from a row $i$ with $d_i > 0$ and added to a row $i$ with $d_i < 0$. In fact for each row $i$ with $d_i > 0$ ones are deleted exactly $d_i$ times during the construction, and for each row $i$ with $d_i < 0$ ones are added exactly $-d_i$ times. Therefore the total number of ones that are moved is equal to $\alpha$. We now want to show that when in one application of $\varphi$ exactly $s$ ones are moved, both the horizontal and vertical boundary do not increase with more than $2s$. From this the theorem follows.

We will only consider what happens at an A-step; the other case is analogous. So suppose we execute an A-step and move $s$ ones, while either the horizontal or vertical boundary increases by more than $2s$. First consider the horizontal boundary. Since the ones in the rows $i$ with $i_1 \leq i \leq c_j$ are removed, and there never was a one in $(c_j+1, j)$, this does not yield any additional boundary. Adding the ones in the rows $i$ with $i \in I$ may yield additional boundary, but only 2 for each one that is added, so at most $2s$ in total.

So we may assume that the vertical boundary has increased by more than $2s$. Adding the ones leads to additional vertical boundary of at most $2s$, so deleting the ones must also have led to additional boundary. This means that there was a one in $(i,j)$, which is now deleted, while there are still ones in $(i,j-1)$ and $(i,j+1)$. As $d_i >0$, those ones cannot have been added during earlier steps in the construction, so they must have been there from the beginning. This means in particular that $c_{j+1} \geq i \geq i_1$, while also $c_{j+1} \leq c_j \leq i_2$, so $c_{j+1} \in R^+$. But $j$ was chosen maximally such that $c_j \in R^+$, so apparently column $j+1$ was in the original construction deleted in an earlier application of $\varphi$.

Suppose this earlier application has been an A-step. Since rows $l$ with $d_l \leq 0$ at some point in the construction can never have $d_l > 0$ at a later point in the construction, we know that all rows $l$ with $i_1 \leq l \leq c_{j+1}$ were contained in $R^+$ in this earlier application of $\varphi$. In particular should the one in $(i,j+1)$ have been moved during this step. So this is impossible.

Now suppose that the earlier application has been a B-step. Then column $j+1$ can only have been chosen to execute this step in if $d_{c_{j+1}+1} < 0$. Since $c_{j+1} \leq c_j$ and $d_{c_j} >0$ (now, and therefore also earlier), we then must have $c_j = c_{j+1}$. Hence $d_{c_j+1} < 0$, which means that to execute this B-step column $j$, rather than column $j+1$, should have been chosen. So this case is impossible as well.

We conclude that the vertical boundary has increased by at most $2s$ as well, and this completes the proof of the theorem.
\end{proof}

In light of this theorem it is interesting to note that $\alpha$ cannot become arbitrary large while $n$ and $m$ are fixed. In fact, we have the following result.

\begin{proposition}\label{propgrensalpha}
Let row sums $\mathcal{R} = (r_1, r_2, \ldots, r_m)$ and column sums $\mathcal{C} = (c_1, c_2, \ldots, c_n)$ be given, where $n=r_1 \geq r_2 \geq \ldots \geq r_m$ and $m=c_1 \geq c_2 \geq \ldots \geq c_n$. Assume that the line sums are consistent. Let $\alpha = \alpha(\mathcal{R}, \mathcal{C})$. Then
\[
\alpha \leq \frac{(m-1)(n-1)}{4}.
\]
\end{proposition}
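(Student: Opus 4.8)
The plan is to realise $\alpha$ as the size of the region in which the ``staircase'' determined by $b_1,\dots,b_m$ and the one determined by $r_1,\dots,r_m$ disagree, and then to trap that region inside a product of an index set of rows with an index set of columns. The naive estimates---bounding each difference $|b_i-r_i|$ by $n-1$ and counting how many indices are nonzero, or arguing row-by-row---only give $\alpha \le \tfrac{(m-1)(n-1)}{2}$, so the extra factor $2$ is the crux. It is gained by estimating the positive and the negative discrepancies through \emph{two} such rectangles and then combining the two bounds multiplicatively rather than additively.

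Concretely, I would first record the two conjugate dictionaries: by definition $c_j \ge i \iff j \le b_i$, and if I introduce $\gamma_j = \#\{i : r_i \ge j\}$ (the conjugate of the row sums) then likewise $r_i \ge j \iff i \le \gamma_j$. Since every $c_j \ge 1$ and every $r_i \ge 1$, we get $b_1 = n = r_1$ and $\gamma_1 = m = c_1$; in particular $d_1 = 0$ and $c_1 = \gamma_1$. Using $\sum_i b_i = \sum_i r_i$ I can write $\alpha$ as a lattice-point count in two ways,
\[ \alpha = \sum_{i=1}^m \max(b_i - r_i, 0) = \#\{(i,j) : r_i < j \le b_i\} = \#\{(i,j): b_i < j \le r_i\}. \]
Next I would set $I^+ = \{i : b_i > r_i\}$, $I^- = \{i : b_i < r_i\}$, $J^+ = \{j : c_j > \gamma_j\}$ and $J^- = \{j : c_j < \gamma_j\}$. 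The key observation is a containment: if $r_i < j \le b_i$ then $b_i > r_i$, so $i \in I^+$, while translating both inequalities through the conjugate dictionaries gives $\gamma_j < i \le c_j$, so $j \in J^+$; hence $\{(i,j): r_i<j\le b_i\} \subseteq I^+ \times J^+$, and symmetrically $\{(i,j): b_i<j\le r_i\} \subseteq I^- \times J^-$. This yields the two bounds $\alpha \le |I^+|\,|J^+|$ and $\alpha \le |I^-|\,|J^-|$. Because $I^+$ and $I^-$ are disjoint and neither contains the index $1$ (as $d_1=0$), we have $|I^+|+|I^-| \le m-1$; similarly $J^+$ and $J^-$ are disjoint and avoid $j=1$ (as $c_1=\gamma_1$), so $|J^+|+|J^-| \le n-1$. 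Multiplying the two displayed bounds and applying the arithmetic--geometric mean inequality twice,
\[ \alpha^2 \le \big(|I^+|\,|J^+|\big)\big(|I^-|\,|J^-|\big) = \big(|I^+|\,|I^-|\big)\big(|J^+|\,|J^-|\big) \le \left(\frac{m-1}{2}\right)^2\left(\frac{n-1}{2}\right)^2, \]
and taking square roots gives exactly $\alpha \le \frac{(m-1)(n-1)}{4}$.

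The step I expect to be the main obstacle is verifying the containment cleanly, i.e. checking that $r_i < j \le b_i$ really forces both $i \in I^+$ and $j \in J^+$; this is where the conjugate identities $c_j \ge i \iff j \le b_i$ and $r_i \ge j \iff i \le \gamma_j$ do all the work. It is worth noting that consistency of the line sums is not actually needed here---only the equality $\sum_i b_i = \sum_i r_i$ together with the normalisations $b_1=r_1=n$ and $c_1=\gamma_1=m$---so the whole argument reduces to the containment plus the routine AM--GM bookkeeping above.
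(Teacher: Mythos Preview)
Your proof is correct, but it is organised differently from the paper's argument, and the difference is worth recording. Both proofs share the row-side ingredients: with $I^\pm=\{i:d_i\gtrless 0\}$ one has $|I^+|+|I^-|\le m-1$ because $d_1=0$, and both finish with the same AM--GM step $\alpha^2\le\big(\tfrac{m-1}{2}\big)^2\big(\tfrac{n-1}{2}\big)^2$. The divergence is in how the ``column-side'' factor is obtained. The paper works only with the row-indexed data: it sets $d^+=\max_{d_i>0}d_i$ and $d^-=\max_{d_i<0}(-d_i)$, uses the trivial bounds $\alpha\le |I^+|\,d^+$ and $\alpha\le |I^-|\,d^-$, and then proves $d^++d^-\le n-1$ by a short case analysis on the relative positions of the indices realising the two maxima. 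You instead introduce the conjugate partition $\gamma_j$ of the row sums, define $J^\pm=\{j:c_j\gtrless\gamma_j\}$, and obtain $|J^+|+|J^-|\le n-1$ for free from $c_1=\gamma_1$ and disjointness, exactly mirroring the row-side argument; the price is the containment lemma $\{(i,j):r_i<j\le b_i\}\subseteq I^+\times J^+$, which you verify via the conjugate dictionaries. In effect your bounds $\alpha\le|I^\pm|\,|J^\pm|$ are (pointwise) weaker than the paper's $\alpha\le|I^\pm|\,d^\pm$ (indeed $d^+\le|J^+|$ and $d^-\le|J^-|$ follow from your containment), but you trade that slack for a cleaner, fully symmetric proof that avoids the case split. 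Your remark that consistency is not used---only $\sum_i b_i=\sum_i r_i$ together with $b_1=r_1=n$ and $c_1=\gamma_1=m$---is also correct and applies equally to the paper's proof.
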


\begin{proof}
For $i = 1, 2, \ldots, m$, let $b_i = \#\{j: c_j \geq i\}$ and $d_i = b_i-r_i$. Let $a$ be the number of rows (indices $i$) with $d_i > 0$ and $b$ the number of rows with $d_i<0$. We assume $\alpha>0$, so $a, b >0$. Define $d^+ = \max\{d_i: d_i>0\}$ and $d^- = \max\{-d_i : d_i<0\}$. We have $b_1 = n = r_1$, so $d_1 = 0$, hence $a+b \leq m-1$.

Now we prove that $d^+ + d^- \leq n-1$. Let $k$ and $l$ be such that $b_k-r_k = d^+$ and $r_l-b_l=d^-$. First suppose $k<l$. Then since $r_1 \geq r_2 \geq \ldots \geq r_m$ and $b_1 \geq b_2 \geq \ldots \geq b_m$ we have $b_1 \geq b_k = b_k-r_k+r_k = d^+ +r_k$ and $-b_m \geq -b_l = r_l -b_l-r_l = d^- -r_l$, hence
\[
d^+ + d^- \leq (b_1-r_k) + (-b_m+r_l) \leq b_1-b_m \leq n-1.
\]
If on the other hand $k>l$, then $r_1 \geq r_l = r_l-b_l+b_l = d^- + b_l$ and $-r_m \geq -r_k = b_k -r_k-b_k = d^+ -b_k$, and hence
\[
d^+ + d^- \leq (-r_m+b_k) + (r_1-b_l) \leq r_1-r_m \leq n-1.
\]

Now note that we have
\[
\alpha = \sum_{d_i>0} d_i = \sum_{d_i<0} (-d_i),
\]
so
\[
\alpha^2 = \left( \sum_{d_i>0} d_i \right) \left( \sum_{d_i<0} (-d_i) \right) \leq \big( a \cdot d^+ \big) \big( b \cdot d^- \big) = \big(a \cdot b \big) \big(d^+ \cdot d^- \big)
\]
\[
\leq \left( \frac{a+b}{2} \right)^2 \left( \frac{d^++d^-}{2} \right)^2 \leq \left( \frac{m-1}{2} \right)^2 \left( \frac{n-1}{2} \right)^2.
\]
Therefore
\[
\alpha \leq \frac{(m-1)(n-1)}{4}.
\]
\end{proof}

In case of large $\alpha$, the construction of Theorem \ref{thmconstruction} actually gives a much smaller horizontal boundary than the bound in Theorem \ref{thmgrensrandalpha}, as the following theorem shows.

\begin{theorem}\label{thmgrenshorizontalerand}
Let row sums $\mathcal{R} = (r_1, r_2, \ldots, r_m)$ and column sums $\mathcal{C} = (c_1, c_2, \ldots, c_n)$ be given, where $n \geq 2$, $n=r_1 \geq r_2 \geq \ldots \geq r_m$ and $m=c_1 \geq c_2 \geq \ldots \geq c_n$. Assume that the line sums are consistent. For the set $F_2$ constructed in Theorem \ref{thmconstruction} we have
\[
L_h(F_2) \leq 4n-4.
\]
\end{theorem}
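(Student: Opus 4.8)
The plan is to convert the claim into a bound on the number of vertical blocks of $F_2$. The horizontal boundary contributed by a single column equals twice the number of its maximal vertical runs of ones, since each such run (block) contributes exactly one piece at its top and one at its bottom; this is precisely the reasoning that gave $L_h(F_1)=2n$ in the proof of Theorem~\ref{thmgrensrandalpha}. Writing $\beta_j$ for the number of blocks in column $j$ of $F_2$, we get $L_h(F_2)=2\sum_{j=1}^n\beta_j$. Because $r_1=n$ equals the number of columns, row $1$ of $F_2$ is completely filled, so every column's topmost block begins in row $1$; hence $\sum_{j=1}^n\beta_j=n+G$, where $G$ is the total number of \emph{gaps} (maximal runs of zeros lying between two ones inside one column). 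Thus $L_h(F_2)=2n+2G$, and the theorem is equivalent to $G\le n-2$.

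Next I would read $G$ off the construction, again replaying $\varphi$ while keeping the deleted columns, as in the proof of Theorem~\ref{thmgrensrandalpha}. Each application of $\varphi$ alters a single column and leaves the others untouched, and a column never chosen keeps the one block it has in $F_1$. For a chosen column the moved ones form the inserted set $I$ below the retained top block (A-step) or are removed as $I$ from the interior of the block (B-step); in both cases the final number of blocks is one plus the number of blocks of $I$, hence two or three according as $I$ is a single block or two blocks. So each step creates one or two gaps, and writing $P$ for the number of processed columns and $P_3$ for those ending with three blocks, $G=P+P_3$. Moreover a column of sum $m$ is never chosen: in an A-step $c_j\le i_2<i_3\le m$, and in a B-step $c_j\le i_4-1<m$; in particular column $1$ is unprocessed, so $P\le n-1$.

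It remains to prove $G=P+P_3\le n-2$, i.e.\ $P_3\le (n-P)-2$: at least two more columns stay unprocessed than end triply-blocked. I would first dispose of a large and easy regime. A step producing two blocks in $I$ moves $s\ge 2$ ones, while every step moves $s\ge 1$, so $\alpha=\sum_{\text{steps}}s\ge P+P_3=G$; hence if $\alpha\le n-2$ the bound is immediate (and it recovers Theorem~\ref{thmgrensrandalpha}). The substantive case is $\alpha\ge n-1$, which I would attack by a charging argument. A second gap appears only when the natural contiguous choice of $I$ would end inside a plateau of the \emph{current} row sums, forcing the split of $I$. The idea is to charge the first gap of each processed column to that column, and the second gap of each triply-blocked column injectively to an unprocessed column, using that A-steps always consume the right-most and B-steps the left-most eligible column, so that interior columns associated with the offending plateau survive to absorb the charges, while the full-height columns of sum $m$ are held in reserve to supply the final ``$-2$''.

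The main obstacle is making this charging rigorous, and I expect it to be genuinely delicate rather than a static count on the final picture. The row sums, the runs $R^+$ and $R^-$, and the locations of plateaus all change after every application of $\varphi$, and as the small example already shows, a plateau may force \emph{several} nearby columns to split while carrying distinct column sums, so one cannot simply charge within a single block of equal column sums. The honest route is probably an invariant maintained across the dynamic process---tracking after each step the gaps created so far together with a guaranteed reserve of still-unprocessed columns and showing the combination never exceeds $n-2$---and the crux is to prove that the greedy right-most/left-most selection always leaves exactly enough such reserve precisely when a plateau forces a split. Verifying this without losing the tight additive constant (the bound is attained already for $n=2,3$) is where the real work lies.
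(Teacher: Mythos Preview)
Your reformulation is correct and useful: writing $L_h(F_2)=2n+2G$ with $G=P+P_3$ is exactly the right bookkeeping, and the observation $G\le\alpha$ disposes of the regime $\alpha\le n-2$. But the proposal stops short of a proof. You explicitly leave the charging argument as an unexecuted plan (``the honest route is probably an invariant\ldots''; ``verifying this\ldots is where the real work lies''), and the sketch you give is not yet right: there is in general only one column of sum $m$, so ``full-height columns held in reserve'' cannot by themselves supply the additive $-2$; and ``interior columns associated with the offending plateau'' is too vague to yield an injective charge, since the same plateau can be responsible for several splits in different steps.

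The paper fills precisely this gap, and its mechanism is quite concrete. It runs induction on $n$: apply $\varphi$ once. If $I$ is a single interval, the processed column contributes horizontal boundary $4$, delete it and apply the hypothesis with $n-1$ columns. If $I$ splits into two intervals, the processed column contributes $6$, and the key observation is that after this step there is a column of sum $i_3-1$ that will never be selected in any later step. This is proved by a count: the number of columns currently of sum $i_3-1$ is $b_{i_3-1}'-b_{i_3}'$, while at most $d_{i_3-1}'$ future A-steps and at most $-d_{i_3}'$ future B-steps can land on such a column; the crucial inequality $r_{i_3-1}'-r_{i_3}'=r_{i_3-1}-r_{i_3}+1\ge 1$ (coming from the fact that the split puts $i_3\in I$ but not $i_3-1$) shows there is one to spare. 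Deleting that spare column as well (it contributes boundary $2$ and its removal does not affect the remaining construction), one applies the hypothesis with $n-2$ columns. The base case $n=2$ forces $\alpha=0$ and $L_h=2n=4n-4$, which is exactly where the ``$-2$'' lives. So the missing idea you were looking for is: the extra unprocessed column to charge a split to is not tied to the plateau in $R^-$ that caused the split, but is a column of sum $i_3-1$, and its existence follows from the jump $r_{i_3-1}'>r_{i_3}'$ created by the split itself.
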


\begin{proof}
We will prove this by induction on $n$. Let $\alpha = \alpha(\mathcal{R}, \mathcal{C})$. If $\alpha > 0$, then there are $l_1$ and $l_2$ such that $2 \leq l_1 < l_2$ and $d_{l_1}>0$ and $d_{l_2} < 0$. Then
\[
b_1 \geq b_{l_1} \geq r_{l_1}+1 \geq r_{l_2}+1 \geq b_{l_2} +2 \geq 1 + 2 = 3.
\]
Hence $n \geq 3$. So when $n=2$, we have $\alpha =0$ and the construction gives $F_2 = F_1$, with $L_h = 2n = 4n-2n = 4n-4$.

Now let $k \geq 3$ and suppose that we have proved the theorem in case $n < k$. Let $n=k$. Let $F_1$ be the uniquely determined neighbour corresponding to the line sums $(\mathcal{R},\mathcal{C})$. We apply $\varphi$ to $F_1$ once. Assume without loss of generality that an A-step is executed in column $j$.

First suppose that $I$ consists of consecutive numbers. Then after moving the ones in column $j$, the length of the horizontal boundary in this column is equal to 4. When we delete this column, we are left with $k-1$ columns, so we can apply the induction hypothesis, which yields that the total length of the horizontal boundary at the end of the construction will be
\[
L_h \leq 4(k-1) - 4 + 4 = 4k-4.
\]

Now suppose that $I$ does not consist of consecutive numbers. Then we know that $I$ is of the form $I = \{i_3, i_3+1, \ldots, t_1-1\} \cup \{t_3, t_3+1, \ldots, t_2\}$. So after moving the ones, the length of the horizontal boundary in column $j$ is equal to 6. Also, we know in particular that the one in $(c_j, j)$ was deleted and a one was added in $(i_3, j)$.

The new parameters, after moving the ones and deleting column $j$, we denote by $r_i'$, $b_i'$ and $d_i'$. The construction will in later steps execute an A-step in at most $d_{i_3-1}'$ columns with sum $i_3-1$ and a B-step in at most $-d_{i_3}'$ columns with sum $i_3-1$. On the other hand, we currently have $b_{i_3-1}' - b_{i_3}'$ columns with sum $i_3-1$.

We know that $r_{i_3-1} \geq r_{i_3}$, and $r_{i_3}' = r_{i_3}-1$. Both in the case $c_j = i_3-1$ and in the case $c_j < i_3-1$, we have $r_{i_3-1}' = r_{i_3-1}$, so
\[
(b_{i_3-1}' - b_{i_3}') - (d_{i_3-1}'-d_{i_3}') = r_{i_3-1}' - r_{i_3}' = r_{i_3-1} - r_{i_3} + 1 \geq 1.
\]
This means that there is at least one column with sum $i_3-1$ in which none of the later steps of the construction will be executed. This column will at the end of the construction therefore still have a horizontal boundary of length 2. If we delete this column entirely and then do the construction, exactly the same steps will be carried out. After all, the deleted column would never have been chosen to execute a step in anyway; also, deleting the column does not influence the choice of the set $I$ in each step of the construction, as the only difference between the row sums of two consecutive rows that is changed, is between rows $i_3-1$ and $i_3$, but as $d_{i_3-1} \geq 0$ and $d_{i_3} < 0$, these rows will never both be in $R^+$ or both be in $R^-$.

By applying the induction hypothesis to the new situation with $n=k-2$, we find that the total length of the horizontal boundary at the end of the construction will be
\[
L_h \leq 4(k-2) - 4 + 6 + 2 = 4k-4.
\]

This completes the induction step.
\end{proof}

Unfortunately, we cannot prove a similar result for the vertical boundary. In fact, we can find examples for which our construction gives a vertical boundary with a length as large as $\frac{4}{9}m^2 + \frac{4}{9}m + \frac{10}{9}$, see Example \ref{exvoorbeeld5}. However, we believe that there always exists a solution with a small boundary length, both horizontal and vertical.

\begin{conjecture}\label{conjecture}
Let row sums $\mathcal{R} = (r_1, r_2, \ldots, r_m)$ and column sums $\mathcal{C} = (c_1, c_2, \ldots, c_n)$ be given, where $n=r_1 \geq r_2 \geq \ldots \geq r_m$ and $m=c_1 \geq c_2 \geq \ldots \geq c_n$. Assume that the line sums are consistent. There exists a set $F_3$ with line sums $(\mathcal{R},\mathcal{C})$ for which
\[
L_h(F_3) \leq 4n-4, \qquad L_v(F_3) \leq 4m-4.
\]
\end{conjecture}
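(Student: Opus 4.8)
The plan is to recast both inequalities in terms of block counts and then attack the joint statement. First I would record the reformulation that puts the two bounds on an equal footing. For any set $F$, each maximal block of ones in a row contributes exactly $2$ to the vertical boundary and each maximal block of ones in a column contributes exactly $2$ to the horizontal boundary, so that
\[
L_v(F) = 2\sum_i \#\{\text{blocks of ones in row } i\}, \qquad L_h(F) = 2\sum_j \#\{\text{blocks of ones in column } j\}.
\]
Thus the conjecture is equivalent to finding a reconstruction whose total number of row-blocks is at most $2m-2$ and whose total number of column-blocks is at most $2n-2$. In this language Theorem~\ref{thmgrenshorizontalerand} says precisely that $F_2$ has at most $2n-2$ column-blocks, and running the same construction on the transposed line sums $(\mathcal{C},\mathcal{R})$ yields a set whose total number of row-blocks is at most $2m-2$ (the cases $m=1$ or $n=1$ being trivial). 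Hence each bound is available on its own; the entire difficulty is to realise both in a single set.

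My main line of attack would be a balanced version of $\varphi$ driven by a two-variable induction on $(m,n)$. Both the A-step and the B-step delete a column while keeping the column-blocks controlled, and by transposition there are mirror steps that delete a row while keeping the row-blocks controlled. I would try to show that at every stage one may choose to perform either a column-deleting step or a row-deleting step so that the created boundary is charged in \emph{both} directions. Maintaining the invariant that the current set is a uniquely determined neighbour, I would charge the two or three blocks created in a processed line against a compensation exactly as in the proof of Theorem~\ref{thmgrenshorizontalerand}, where a $3$-block column is paid for by a simultaneously saved $1$-block column, lowering the dimension by $2$ while only $8$ units of budget are spent. The new ingredient is to arrange such a compensation for whichever direction is being reduced, and to select at each step the direction whose budget is currently tighter.

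A cleaner alternative is extremal. Since the transposed construction already produces a reconstruction with $L_v \leq 4m-4$, the family of reconstructions satisfying $L_v \leq 4m-4$ is nonempty; among these I would take one, $F_3$, minimising $L_h$. Then $L_v(F_3) \leq 4m-4$ holds by construction, and it remains to prove $L_h(F_3) \leq 4n-4$. The idea is to argue by contradiction: if some column of $F_3$ carried three or more blocks, a local switch on four cells (two ones and two zeros at the corners of a rectangle, exchanged for the complementary pattern) should be available that merges two of these column-blocks, strictly lowering $L_h$, while keeping $L_v$ at or below $4m-4$. As switches connect all reconstructions of the same line sums, the combinatorial task is to locate such a block-merging switch inside a column with many blocks that creates no more row-blocks than the slack $4m-4-L_v(F_3)$ permits.

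I expect the main obstacle to be exactly the interaction between the two directions, which is presumably why the statement is only conjectured. In the column-deleting construction the row sums are corrected by moving ones vertically within a column and depositing them into the rows of $R^-$; this is precisely the operation that scatters ones horizontally within a row and inflates the vertical boundary, and the examples behind the bound $\frac{4}{9}m^2 + \frac{4}{9}m + \frac{10}{9}$ show the scatter can be severe. Any proof must therefore show that the freedom in the set $I$, in the processed line, and in the choice of A-, B- or transposed step suffices to bound the scatter in both directions at once; for the extremal route the crux is the exchange lemma asserting that a column with three or more blocks always admits a block-merging switch within the allotted vertical slack. Establishing that no configuration can be forced to be rigid simultaneously in both directions — that one cannot be trapped into many row-blocks and many column-blocks at the same time — is, I believe, the heart of the matter.
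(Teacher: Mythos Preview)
The statement is labelled a \emph{conjecture} in the paper and no proof is given there; the paper only establishes the one-sided bound $L_h(F_2)\le 4n-4$ (Theorem~\ref{thmgrenshorizontalerand}) and exhibits Example~\ref{exvoorbeeld5} to show that the same construction can fail the vertical bound. There is therefore nothing in the paper to compare your argument against.

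Your proposal is candid about this: it is not a proof but a programme. The reformulation via block counts is correct and useful, and the observation that the transposed construction secures $L_v\le 4m-4$ on its own is exactly right. However, neither of your two strategies is carried to completion. In the balanced-$\varphi$ approach you assert that one may always choose a column- or row-deleting step so that the created boundary can be charged in both directions, but you give no mechanism for this; the proof of Theorem~\ref{thmgrenshorizontalerand} charges a $3$-block column against a spare $1$-block column of the \emph{same} orientation, and there is no reason the analogous spare should exist in the transverse direction at the same moment. In the extremal approach the crux is your ``exchange lemma'' that a column with three or more blocks always admits a block-merging switch within the available vertical slack; this is precisely the unproved assertion, and Example~\ref{exvoorbeeld5} already warns that local switches can trade column-blocks for many row-blocks. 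You yourself identify this interaction as ``the heart of the matter,'' which is accurate --- but it means the proposal, as it stands, does not close the gap that makes the statement a conjecture.
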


\section{Examples}\label{examples}

We give two families of examples for which we can prove that the construction of Theorem \ref{thmconstruction} gives the smallest possible length of the boundary.

\begin{example}\label{exvoorbeeld1}
Let the number of columns $n$ be odd and let $m=n$. Define line sums
\[
\mathcal{C} = \mathcal{R} = (n, n-1, n-1, n-3, n-3, \ldots, 4, 4, 2, 2).
\]
We calculate
\[
(b_1, b_2, \ldots, b_n) = (n, n, n-2, n-2, \ldots, 3, 3, 1),
\]
\[
(d_1, d_2, \ldots, d_n) = (0, +1, -1, +1, -1, \ldots, +1, -1, +1, -1).
\]
So $\alpha = \alpha(\mathcal{R}, \mathcal{C}) = \frac{n-1}{2}$. Theorem \ref{thmgrensrandalpha} tells us that the set $F_2$ constructed with Theorem \ref{thmconstruction} satisfies
\[
L_h(F_2) \leq 2n+2\alpha = 3n-1, \qquad L_v(F_2) \leq 2m + 2\alpha = 3n-1.
\]
On the other hand, by \cite[Corollary 1]{birgit5} we know that for any set $F$ with these line sums, we have
\[
L_h(F) \geq 2n + \frac{n-1}{2}\cdot (1-(-1)) + 2 \cdot 0 = 3n-1,
\]
and by symmetry also $L_v(F) \geq 3n-1$. This shows that $F_2$ has the smallest boundary among all sets $F$ with these line sums. See for the constructed set $F_2$ in the case that $n=9$ Figure \ref{figvoorbeeld1}. (This example is in fact a slightly modified version of \cite[Example 3]{birgit5}.)
\end{example}

\begin{figure}
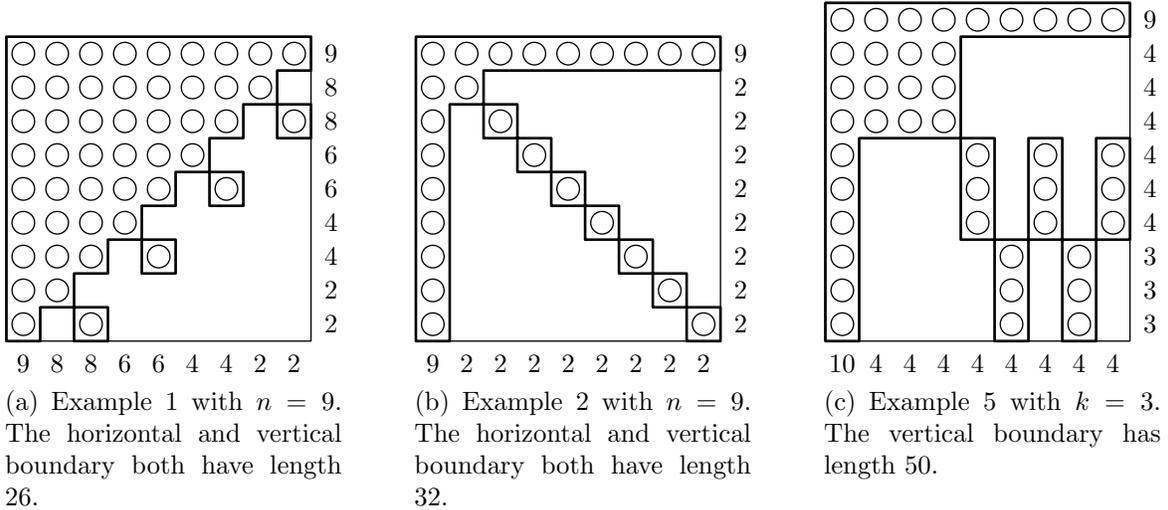

  \begin{center}
    \subfigure[Example \ref{exvoorbeeld1} with $n=9$. The horizontal and vertical boundary both have length 26.]{\label{figvoorbeeld1}\includegraphics{plaatje.1}}
    \qquad
    \subfigure[Example \ref{exvoorbeeld2} with $n=9$. The horizontal and vertical boundary both have length 32.]{\label{figvoorbeeld2}\includegraphics{plaatje.2}}
    \qquad
    \subfigure[Example \ref{exvoorbeeld5} with $k=3$. The vertical boundary has length 50.]{\label{figvoorbeeld5}\includegraphics{plaatje.3}}
  \end{center}
  \caption{The constructed sets $F_2$ for some of the examples.}
  \label{figvoorbeelden}
\end{figure}

\begin{example}\label{exvoorbeeld2}
Let $m=n \geq 2$. Define line sums
\[
\mathcal{C} = \mathcal{R} = (n, 2, 2, 2, \ldots, 2, 2).
\]
We calculate
\[
(b_1, b_2, \ldots, b_n) = (n, n, 1, 1, \ldots, 1, 1),
\]
\[
(d_1, d_2, \ldots, d_n) = (0, n-2, -1, -1, \ldots, -1).
\]
So $\alpha = \alpha(\mathcal{R}, \mathcal{C}) = n-2$. Theorem \ref{thmgrensrandalpha} tells us that the set $F_2$ constructed with Theorem \ref{thmconstruction} satisfies
\[
L_h(F_2) \leq 2n+2\alpha = 4n-4, \qquad L_v(F_2) \leq 2m + 2\alpha = 4n-4.
\]
On the other hand, by \cite[Corollary 1]{birgit5} we know that for any set $F$ with these line sums, we have
\[
L_h(F) \geq 2n + 2(n-2) = 4n-4,
\]
and by symmetry also $L_v(F) \geq 4n-4$. This shows that $F_2$ has the smallest boundary among all sets $F$ with these line sums. See for the constructed set $F_2$ in the case that $n=9$ Figure \ref{figvoorbeeld2}. (This example is in fact a slightly modified version of \cite[Example 4]{birgit5}.)
\end{example}

We can generalise Example \ref{exvoorbeeld2} to larger $\alpha$, in which case the bound of Theorem \ref{thmgrensrandalpha} is no longer sharp. However, in this case we can use Theorem \ref{thmgrenshorizontalerand} to prove that the horizontal boundary is the smallest possible, as shown below.

\begin{example}\label{exvoorbeeld3}
Let $k$ be a positive integer and let $m=kn-k+1$. Define line sums
\[
\mathcal{C} = (kn-k+1, k+1, k+1, \ldots, k+1, k+1),
\qquad
\mathcal{R} = (n, 2, 2, \ldots, 2).
\]
We calculate
\[
(b_1, b_2, \ldots, b_m) = (\underbrace{n, n, \ldots, n}_{k+1 \text{ times}}, \underbrace{1, 1, \ldots, 1}_{kn-2k \text{ times}}),
\]
\[
(d_1, d_2, \ldots, d_m) = (0, \underbrace{n-2, n-2, \ldots, n-2}_{k \text{ times}}, \underbrace{-1, -1, \ldots, -1}_{kn-2k \text{ times}}).
\]
Theorem \ref{thmgrenshorizontalerand} tells us that the set $F_2$ constructed with Theorem \ref{thmconstruction} satisfies
\[
L_h(F_2) \leq 4n-4.
\]
On the other hand, by \cite[Corollary 1]{birgit5} we know that for any set $F$ with these line sums, we have
\[
L_h(F) \geq 2n + 2(n-2) = 4n-4.
\]
This shows that $F_2$ has the smallest horizontal boundary among all sets $F$ with these line sums.
\end{example}

The next example shows that the upper bound on $\alpha$ given in Proposition \ref{propgrensalpha} can be achieved.

\begin{example}\label{exvoorbeeld4}
Let $k$ be a positive integer and let $m=n=2k+1$. Define line sums
\[
\mathcal{C} = \mathcal{R} =(2k+1, k+1, k+1, \ldots, k+1).
\]
We calculate
\[
(b_1, b_2, \ldots, b_m) = (\underbrace{2k+1, 2k+1, \ldots, 2k+1}_{k+1 \text{ times}}, \underbrace{1, 1, \ldots, 1}_{k \text{ times}}),
\]
\[
(d_1, d_2, \ldots, d_m) = (0, \underbrace{k, k, \ldots, k}_{k \text{ times}}, \underbrace{-k, -k, \ldots, -k}_{k \text{ times}}).
\]
Hence
\[
\alpha = \alpha(\mathcal{R}, \mathcal{C}) = k^2 = \frac{(m-1)(n-1)}{4}.
\]
\end{example}

Finally we show by an example that the vertical boundary of the set $F_2$ constructed in Theorem \ref{thmconstruction} can become quite large, so it is not possible to prove a similar result as Theorem \ref{thmgrenshorizontalerand} for the vertical boundary.

\begin{example}\label{exvoorbeeld5}
Let $k$ be a positive integer and let $m=3k+1$, $n=3k$. Define line sums
\[
\mathcal{C} =(3k+1, k+1, k+1, \ldots, k+1), \qquad \mathcal{R} = (3k, \underbrace{k+1, k+1, \ldots, k+1}_{2k \text{ times}}, \underbrace{k, k, \ldots, k}_{k \text{ times}}).
\]
We calculate
\[
(b_1, b_2, \ldots, b_m) = (\underbrace{3k, 3k, \ldots, 3k}_{k+1 \text{ times}}, \underbrace{1, 1, \ldots, 1}_{2k \text{ times}}),
\]
\[
(d_1, d_2, \ldots, d_m) =\]\[ (0, \underbrace{2k-1, 2k-1, \ldots, 2k-1}_{k \text{ times}}, \underbrace{-k, -k, \ldots, -k}_{k \text{ times}}, \underbrace{-(k-1), -(k-1), \ldots, -(k-1)}_{k \text{ times}}).
\]
Hence $\alpha = \alpha(\mathcal{R}, \mathcal{C}) = 2k^2-k$.

The construction executes $2k-1$ times an A-step, in each of the columns $3k$, $3k-1$, \ldots, $k+2$. In the first step (and every odd-numbered step after that) we have $I = \{k+2, k+3, \ldots, 2k+1\}$. At the beginning of the second step, however, the row sums in rows $k+2$, $k+3$, \ldots, $3k+1$ are all equal, so we have $I = \{2k+2, 2k+3, \ldots, 3k+1\}$. The same holds for every other even-numbered step. This means that at the end of the construction, the vertical boundary in each of the rows $k+2$, $k+3$, \ldots, $2k+1$ will be equal to $2(k+1)$, while the vertical boundary in each of the rows $2k+2$, $2k+3$, \ldots, $3k+1$ will be equal to $2k$. Adding the boundary of 2 in each of the rows 1, 2, \ldots, $k+1$, we find
\[
L_v(F_2) = (k+1) \cdot 2 + k \cdot 2(k+1) + k \cdot 2k = 4k^2 + 4k + 2.
\]
This is not linear in $m = 3k+1$. It is in fact equal to $\frac{4}{9}m^2 + \frac{4}{9}m + \frac{10}{9}$. See for the constructed set $F_2$ in the case that $k=3$ Figure \ref{figvoorbeeld5}.

It is clear that in fact there exists a set $F$ with the same line sums, but with a much smaller vertical boundary, which supports Conjecture \ref{conjecture}.
\end{example}

\section{Generalising the results for arbitrary $c_1$ and $r_1$}\label{generalisation}

In all results of the previous sections, we used the condition that $c_1 = m$ and $r_1 = n$. This is purely for convenience; it is not a necessary condition. We can easily generalise the results for the case that these conditions do not necessarily hold.

Consider a set $F$ with row sums $\mathcal{R} = (r_1, r_2, \ldots, r_m)$ and column sums $\mathcal{C} = (c_1, c_2, \ldots, c_n)$, where $r_1 \geq r_2 \geq \ldots \geq r_m$ and $c_1 \geq c_2 \geq \ldots \geq c_n$, but not necessarily $c_1 = m$ and $r_1=n$. Let $F'$ be a set that is equal to $F$, except that we add a full row with index 0 and a full column with index 0, i.e.
\[
F' = F \cup \{(0,j) : 0 \leq j \leq n\} \cup \{(i,0) : 1 \leq i \leq m\}.
\]
The row sums of $F'$ are
\[
\mathcal{R}' = (r_0', r_1', r_2', \ldots, r_m') = (n, r_1+1, r_2+1, \ldots, r_m+1).
\]
and the column sums of $F'$ are
\[
\mathcal{C}' = (c_0', c_1', c_2', \ldots, c_n') = (m, c_1+1, c_2+1, \ldots, c_n+1).
\]
It is easy to see that $\alpha(\mathcal{R}',\mathcal{C}') = \alpha(\mathcal{R},\mathcal{C})$. Now consider the length of the horizontal boundary. For every $j$ with $(1,j) \in F$, the horizontal boundary in column $j$ of $F'$ is equal to the horizontal boundary of column $j$ in $F$. For every $j$ with $(1,j) \not\in F$, however, the horizontal boundary in column $j$ of $F'$ is 2 larger than the horizontal boundary in column $j$ of $F$. (This also holds for column 0, where the horizontal boundary of $F$ had length 0 and the horizontal boundary of $F'$ has length 2.) Hence
\[
L_h (F') = L_h (F) + 2(n+1-r_1).
\]
Analogously, we have
\[
L_v(F') = L_v(F) = 2(m+1-c_1).
\]
By applying Theorems \ref{thmgrensrandalpha} and \ref{thmgrenshorizontalerand} as well as Proposition \ref{propgrensalpha} to $F'$ (with $n+1$ columns and $m+1$ rows), we acquire the following results.

\begin{proposition}
Let row sums $\mathcal{R} = (r_1, r_2, \ldots, r_m)$ and column sums $\mathcal{C} = (c_1, c_2, \ldots, c_n)$ be given, where $r_1 \geq r_2 \geq \ldots \geq r_m$ and $c_1 \geq c_2 \geq \ldots \geq c_n$. Assume that the line sums are consistent. Let $\alpha = \alpha(\mathcal{R}, \mathcal{C})$. Then
\[
\alpha \leq \frac{mn}{4}.
\]
\end{proposition}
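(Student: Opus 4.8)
The plan is to derive this statement directly from Proposition \ref{propgrensalpha} by means of the padding construction already introduced in this section, so that essentially no new work is required. The idea is that the set $F'$, obtained from $F$ by adjoining a full row with index $0$ and a full column with index $0$, has $m+1$ rows and $n+1$ columns, and it is built precisely so that its largest row sum equals its number of columns and its largest column sum equals its number of rows. Hence $F'$ satisfies the hypotheses of Proposition \ref{propgrensalpha} (with $m$ and $n$ there replaced by $m+1$ and $n+1$), and consistency of $(\mathcal{R}',\mathcal{C}')$ is automatic since $F'$ itself realises these line sums.

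First I would reindex the rows and columns of $F'$ as $1, 2, \ldots, m+1$ and $1, 2, \ldots, n+1$, so that $r_1' = n+1$ (the full top row), $r_i' = r_{i-1}+1$ for $i \geq 2$, and $c_1' = m+1$ (the full left column), $c_j' = c_{j-1}+1$ for $j \geq 2$. Applying Proposition \ref{propgrensalpha} to $F'$ then yields
\[
\alpha(\mathcal{R}',\mathcal{C}') \leq \frac{((m+1)-1)((n+1)-1)}{4} = \frac{mn}{4}.
\]
It therefore only remains to justify the claim, already asserted above, that $\alpha(\mathcal{R}',\mathcal{C}') = \alpha(\mathcal{R},\mathcal{C})$.

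The main point is this invariance of $\alpha$, which I would verify at the level of the quantities $d_i$. Writing $b_i' = \#\{j : c_j' \geq i\}$, the full left column contributes $1$ to every $b_i'$ with $1 \leq i \leq m+1$, while for $j \geq 2$ we have $c_j' \geq i$ exactly when $c_{j-1} \geq i-1$. Hence $b_1' = 1 + \#\{l : c_l \geq 0\} = n+1 = r_1'$, giving $d_1' = 0$, and for $i \geq 2$ we get $b_i' = 1 + b_{i-1}$, so that
\[
d_i' = b_i' - r_i' = (1 + b_{i-1}) - (r_{i-1} + 1) = d_{i-1}.
\]
Thus the sequence $(d_i')$ is just $(0, d_1, d_2, \ldots, d_m)$, a shifted copy of $(d_i)$ with an extra leading zero, and consequently $\alpha(\mathcal{R}',\mathcal{C}') = \tfrac12 \sum_i |d_i'| = \tfrac12 \sum_i |d_i| = \alpha(\mathcal{R},\mathcal{C})$. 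Combined with the displayed inequality above, this gives $\alpha \leq \frac{mn}{4}$.

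The only genuine obstacle is this last invariance, and it is mild: one must simply check that padding with a full row and column shifts the index of each $d_i$ by one and introduces a harmless leading zero, rather than altering any of the nonzero discrepancies. Everything else is a direct substitution of $m+1$ and $n+1$ into Proposition \ref{propgrensalpha}.
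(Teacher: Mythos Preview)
Your proposal is correct and follows exactly the approach the paper takes: apply the padding construction of this section to obtain $F'$ with $m+1$ rows and $n+1$ columns, invoke Proposition~\ref{propgrensalpha} for $F'$, and use the invariance $\alpha(\mathcal{R}',\mathcal{C}') = \alpha(\mathcal{R},\mathcal{C})$. The paper simply asserts this invariance as ``easy to see,'' whereas you supply the explicit verification that $d_i' = d_{i-1}$ for $i\ge 2$ and $d_1'=0$; this added detail is correct and the overall argument is the same.
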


\begin{theorem}
Let row sums $\mathcal{R} = (r_1, r_2, \ldots, r_m)$ and column sums $\mathcal{C} = (c_1, c_2, \ldots, c_n)$ be given, where $r_1 \geq r_2 \geq \ldots \geq r_m$ and $c_1 \geq c_2 \geq \ldots \geq c_n$. Assume that the line sums are consistent. Let $\alpha = \alpha(\mathcal{R}, \mathcal{C})$. Then there exists a set $F_2$ satisfying these line sums such that
\[
L_h(F_2) \leq \min( \ 2r_1+2\alpha, \ 2r_1+2n-2 \ )
\]
and
\[
L_v(F_2) \leq 2c_1 + 2\alpha.
\]
\end{theorem}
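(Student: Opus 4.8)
The plan is to reduce to the already-proved case in which the largest row sum equals the number of columns and the largest column sum equals the number of rows, using exactly the padding construction introduced just before the statement. First I would form the padded line sums $(\mathcal{R}', \mathcal{C}')$ with $\mathcal{R}' = (n+1, r_1+1, \ldots, r_m+1)$ and $\mathcal{C}' = (m+1, c_1+1, \ldots, c_n+1)$, obtained by prepending a full row $0$ and a full column $0$. The padded system then has $n' = n+1$ columns and $m' = m+1$ rows and satisfies $r_0' = n'$ and $c_0' = m'$, these line sums are consistent since they are realised by $F'$, and as already observed $\alpha(\mathcal{R}', \mathcal{C}') = \alpha(\mathcal{R}, \mathcal{C}) = \alpha$. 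I would then run the construction of Theorem~\ref{thmconstruction} on $(\mathcal{R}', \mathcal{C}')$ to obtain a set $F_2'$ realising them.

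The observation that makes the reduction reversible is that row $0$ has sum $n'$ and column $0$ has sum $m'$, the maximum possible values. Hence in \emph{every} realisation of $(\mathcal{R}', \mathcal{C}')$, and in particular in $F_2'$, row $0$ and column $0$ are entirely filled. Therefore $F_2'$ is precisely the padding of the set $F_2$ got from $F_2'$ by deleting row $0$ and column $0$, and this $F_2$ realises $(\mathcal{R}, \mathcal{C})$: deleting the full column $0$ lowers each $r_i' = r_i + 1$ back to $r_i$, and deleting the full row $0$ lowers each $c_j' = c_j + 1$ back to $c_j$.

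It then remains to convert the bounds. Applying Theorem~\ref{thmgrensrandalpha} to $F_2'$ (with $m' = m+1$, $n' = n+1$, $\alpha' = \alpha$) gives $L_h(F_2') \le 2(n+1) + 2\alpha$ and $L_v(F_2') \le 2(m+1) + 2\alpha$, while applying Theorem~\ref{thmgrenshorizontalerand} to the same set (legitimate since $n' = n+1 \ge 2$) gives the further estimate $L_h(F_2') \le 4(n+1) - 4 = 4n$. Because $F_2'$ is the padding of $F_2$, the boundary relations established above the statement read $L_h(F_2') = L_h(F_2) + 2(n+1-r_1)$ and $L_v(F_2') = L_v(F_2) + 2(m+1-c_1)$, where the additive terms are nonnegative as $r_1 \le n$ and $c_1 \le m$. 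Substituting the first horizontal estimate yields $L_h(F_2) \le 2r_1 + 2\alpha$, substituting the second yields $L_h(F_2) \le 2r_1 + 2n - 2$, and substituting the vertical estimate yields $L_v(F_2) \le 2c_1 + 2\alpha$.

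The only genuinely substantive points, as opposed to routine arithmetic, are the forced fullness of row $0$ and column $0$, which is what licenses stripping them off without disturbing the realisation of $(\mathcal{R}, \mathcal{C})$, and the fact that the two competing horizontal estimates hold simultaneously for the \emph{single} constructed set $F_2$, so that their minimum is attained by one object rather than requiring two different reconstructions. Once these are in hand the bookkeeping is mechanical, and the degenerate case $n = 0$ (the empty picture) is dismissed separately.
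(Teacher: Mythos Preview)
Your argument is correct and follows exactly the route the paper intends: pad with a full row $0$ and column $0$, apply Theorems~\ref{thmgrensrandalpha} and~\ref{thmgrenshorizontalerand} to the padded system, and undo the padding via the boundary identities $L_h(F') = L_h(F) + 2(n+1-r_1)$ and $L_v(F') = L_v(F) + 2(m+1-c_1)$. The paper itself gives no written proof beyond the sentence ``By applying Theorems~\ref{thmgrensrandalpha} and~\ref{thmgrenshorizontalerand} \ldots\ to $F'$ we acquire the following results''; you have simply supplied the details, including the key observation---left implicit in the paper---that the constructed $F_2'$ necessarily has row $0$ and column $0$ full, so that stripping them recovers a genuine realisation of $(\mathcal{R},\mathcal{C})$. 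Your value $r_0' = n+1$ is in fact the correct one (the paper's displayed $r_0' = n$ is a typo), and your remark that both horizontal bounds apply to the \emph{same} $F_2$ is exactly what justifies taking the minimum.
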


\end{document}